\newcommand{\CC}{\mathbb{C}}
\newcommand{\NN}{\mathbb{N}}
\newcommand{\ZZ}{\mathbb{Z}}
\newcommand{\Hc}{\mathcal{H}}
\newcommand{\Oc}{\mathcal{O}}
\newcommand{\gfr}{\mathfrak{g}}
\newcommand{\set}[1]{\left\{ #1 \right\}}
\newcommand{\setb}[1]{\left( #1 \right)}
\newcommand{\abs}[1]{\left| #1 \right|}
\newcommand{\bino}[2]{\begin{pmatrix} #1 \\ #2 \end{pmatrix}}
\newcommand{\caux}{C_{\mathrm{aux}}}
\newtheorem{mymasterthm}{notForUse}
\theoremstyle{definition}
\theoremstyle{plain}
\newtheorem{mylemma}[mymasterthm]{Lemma}
\newtheorem{mythm}[mymasterthm]{Theorem}
\newtheorem{mycorol}[mymasterthm]{Corollary}
\newtheorem{myprop}[mymasterthm]{Proposition}
\title[Size of a linear combination of two l.r.s.]{On the size of a linear combination of two linear recurrence sequences over function fields}
\subjclass[2020]{11B37}
\keywords{Linear recurrence sequence, growth}
\author[S. Heintze]{Sebastian Heintze}
\address{Sebastian Heintze\newline
	\indent Graz University of Technology\newline
	\indent Institute of Analysis and Number Theory\newline
	\indent Steyrergasse 30/II \newline
	\indent A-8010 Graz, Austria}
\email{heintze@math.tugraz.at}
\thanks{Supported by Austrian Science Fund (FWF) under project I4406}
\begin{document}
	
	\maketitle
	
	
	\begin{abstract}
		Let $ G_n $ and $ H_m $ be two non-degenerate linear recurrence sequences defined over a function field $ F $ in one variable over $ \CC $, and let $ \mu $ be a valuation on $ F $.
		We prove that under suitable conditions there are effectively computable constants $ c_1 $ and $ C' $ such that the bound
		\begin{equation*}
			\mu(G_n - H_m) \leq \mu(G_n) + C'
		\end{equation*}
		holds for $ \max \setb{n,m} > c_1 $.
	\end{abstract}
	
	\section{Introduction}
	
	Linear recurrence sequences are studied by many authors in the past and until now.
	Here, by a linear recurrence sequence we mean a polynomial-exponential function, from the set $ \NN_0 $ of non-negative integers into a given field $ F $, of the form
	\begin{equation*}
		G_n = a_1(n) \alpha_1^n + \cdots + a_d(n) \alpha_d^n,
	\end{equation*}
	where the $ \alpha_i $ are called the characteristic roots of the linear recurrence sequence and the coefficients $ a_i(n) $ are polynomials in $ n $.
	It is well known that such a sequence satisfies are linear recurring formula.
	We say that the seuquence $ (G_n)_{n \in \NN_0} $ is defined over the field $ F $ if all characteristic roots $ \alpha_i $ as well as all coefficients of all polynomials $ a_i(n) $ belong to $ F $.
	The recurrence sequence is called non-degenerate if no ratio of two distinct characteristic roots $ \alpha_i/\alpha_j $ for $ i \neq j $ is a root of unity in the case that $ F $ is a number field, or if no ratio of two distinct characteristic roots $ \alpha_i/\alpha_j $ for $ i \neq j $ is contained in the field of constants when $ F $ is a function field in one variable over $ \CC $, respectively.
	
	In \cite{fuchs-heintze-p3} the author together with Fuchs gave a bound on the size of the $ n $-th element of such a linear recurrence sequence defined over a function field, see Proposition~\ref{prop:boundgrowth} below.
	They also provide a proof for a well known bound on the growth of $ G_n $ in the case that $ F $ is a number field in the appendix of \cite{fuchs-heintze-p3}.
	
	Recently, Peth\H{o} \cite{petho-} considered the size of the difference of two linear recurrence sequences over number fields.
	More precisely, it is proven that for two recurrences $ A_n $ and $ B_m $, taking only integer values, under some technical conditions ($ A_n $ has a dominant root, i.e.\ there is a unique characteristic root $ \alpha $ with maximal absolute value, $ B_m $ has a pair of conjugate complex dominating characteristic roots, and some further assumptions) the bound
	\begin{equation*}
		\abs{A_n - B_m} > \abs{A_n}^{1-(c_0 \log^2 n)/n}
	\end{equation*}
	holds for $ (n,m) \in \NN_0^2 $ with $ \max \setb{n,m} > c_1 $, where $ c_0,c_1 $ are effectively computable constants.
	
	The purpose of the present paper is to find and prove a suitable similar bound in the setting of function fields in one variable over the field of complex numbers.
	
	\section{Notation and results}
	
	Throughout this paper we denote by $ F $ a function field in one variable over $ \CC $ and by $ \gfr $ the genus of $ F $.
	For the convenience of the reader we will give a short wrap-up of the notion of valuations that can e.g.\ also be found in \cite{fuchs-heintze-p3,fuchs-heintze-p8}:
	For $ c \in \CC $ and $ f(x) \in \CC(x)^* $, where $ \CC(x) $ is the rational function field over $ \CC $, we denote by $ \nu_c(f) $ the unique integer such that $ f(x) = (x-c)^{\nu_c(f)} p(x) / q(x) $ with $ p(x),q(x) \in \CC[x] $ such that $ p(c)q(c) \neq 0 $.
	Further we write $ \nu_{\infty}(f) = \deg q - \deg p $ if $ f(x) = p(x) / q(x) $.
	Additionally, we set $ \nu(0) = \infty $ for each $ \nu $ from above.
	These functions $ \nu : \CC(x) \rightarrow \ZZ \cup \set{\infty} $ are up to equivalence all valuations in $ \CC(x) $.
	If $ \nu_c(f) > 0 $, then $ c $ is called a zero of $ f $, and if $ \nu_c(f) < 0 $, then $ c $ is called a pole of $ f $, where $ c \in \CC \cup \set{\infty} $.
	For a finite extension $ F $ of $ \CC(x) $ each valuation in $ \CC(x) $ can be extended to no more than $ [F : \CC(x)] $ valuations in $ F $.
	This again gives up to equivalence all valuations in $ F $.
	Both, in $ \CC(x) $ as well as in $ F $ the sum-formula
	\begin{equation*}
		\sum_{\nu} \nu(f) = 0
	\end{equation*}
	holds for each nonzero $ f $, where the sum is taken over all valuations in the considered function field.
	Moreover, valuations have the properties $ \nu(fg) = \nu(f) + \nu(g) $ and $ \nu(f+g) \geq \min \setb{\nu(f), \nu(g)} $ for all $ f,g \in F $.
	For more information about valuations we refer to \cite{stichtenoth-1993}.
	
	For a finite set $ S $ of valuations on $ F $, we denote by $ \Oc_S^* $ the set of $ S $-units in $ F $, i.e.\ the set
	\begin{equation*}
		\Oc_S^* = \set{f \in F^* : \nu(f) = 0 \text{ for all } \nu \notin S}.
	\end{equation*}
	Lastly, we call two elements $ \alpha, \beta \in F $ multiplicatively independent if $ \alpha^r \beta^s \in \CC $ for $ r,s \in \ZZ $ implies that $ r = s = 0 $.
	
	Our first result is now the following theorem which states that there cannot be much cancelation in the expression $ a G_n - b H_m $ if both indices are large:
	\begin{mythm}
		\label{thm:bothlarge}
		Let $ G_n = a_1(n) \alpha_1^n + \cdots + a_d(n) \alpha_d^n $ and $ H_m = b_1(m) \beta_1^m + \cdots + b_t(m) \beta_t^m $ be two non-degenerate linear recurrence sequences defined over $ F $.
		Assume that $ \alpha_1 \notin \CC $, and that for any $ j \in \set{1,\ldots,t} $ the pair $ (\alpha_1,\beta_j) $ is multiplicatively independent.
		Furthermore, let $ \mu $ be a valuation on $ F $ such that $ \mu(\alpha_1) \leq \mu(\alpha_i) $ for $ i \in \set{1,\ldots,d} $.
		Fix $ a,b \in F^* $.
		Then there exist effectively computable constants $ c_0 $ and $ C $, independent of $ n $ and $ m $, such that for $ \min \setb{n,m} > c_0 $ we have
		\begin{equation*}
			\mu(a G_n - b H_m) \leq \mu(G_n) + C.
		\end{equation*}
	\end{mythm}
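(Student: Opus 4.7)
The plan is to argue by contradiction: suppose that for some large effective constant $C$ there exists a pair $(n,m)$ with $\min(n,m) > c_0$ such that $\mu(aG_n - bH_m) > \mu(G_n) + C$. Setting $\epsilon := aG_n - bH_m$, expanding both sequences yields the vanishing $F$-linear combination
\[
	\sum_{i=1}^{d} a\,a_i(n)\,\alpha_i^n \;-\; \sum_{j=1}^{t} b\,b_j(m)\,\beta_j^m \;-\; \epsilon \;=\; 0,
\]
consisting of $d+t+1$ elements of $F^*$ once $c_0$ is chosen larger than every zero of the polynomials $a_i, b_j$. Choose a finite set $S$ of valuations on $F$ containing $\mu$ and every place at which any of $\alpha_i, \beta_j, a, b$ or any coefficient of the polynomials $a_i, b_j$ fails to be a unit; then every summand above is an $S$-unit.

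The key tool is the function-field Brownawell--Masser theorem: for a vanishing $S$-unit sum with no vanishing proper sub-sum, the valuation of any ratio of summands is bounded by an effective constant $\caux$ depending only on $|S|$, the genus $\gfr$, and $d+t$. Assuming no such vanishing sub-sum occurs, applying this to the pair $a\,a_1(n) \alpha_1^n$ and $-\epsilon$ gives $\mu(\epsilon) - n\mu(\alpha_1) \leq \mu(a) + \mu(a_1(n)) + \caux$. Since $a_1(n)$ is a polynomial with fixed coefficients in $F$ evaluated at the integer $n$, the quantity $\mu(a_1(n))$ stays uniformly bounded for $n$ large. Combined with the lower bound $\mu(G_n) \geq n\mu(\alpha_1) - C_0$ from Proposition~\ref{prop:boundgrowth}---which uses the minimality of $\mu(\alpha_1)$ among the $\mu(\alpha_i)$---this yields $\mu(\epsilon) \leq \mu(G_n) + C$, closing the contradiction in the generic case.

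The main obstacle will be the treatment of vanishing proper sub-sums, without which Brownawell--Masser does not apply directly. A minimal one takes the form $\sum_{i \in I_1} a\,a_i(n)\,\alpha_i^n = \sum_{j \in I_2} b\,b_j(m)\,\beta_j^m + \delta \epsilon$ with $\delta \in \{0,1\}$. The purely $\alpha$-type case ($I_2 = \emptyset$, $\delta = 0$) and purely $\beta$-type case ($I_1 = \emptyset$, $\delta = 0$) are ruled out for $\min(n,m)$ large by the non-degeneracy of $G_n$ and $H_m$ and a function-field Skolem--Mahler--Lech argument. For a mixed sub-sum with $\delta = 0$, iterating Brownawell--Masser on the sub-sum itself forces, in the minimal case, an identity of the shape $a\,a_1(n)\,\alpha_1^n = b\,b_{j_0}(m)\,\beta_{j_0}^m$; comparing $\nu(\alpha_1^n/\beta_{j_0}^m)$ across all $\nu$ then contradicts the multiplicative independence of $(\alpha_1, \beta_{j_0})$ except on a zero- or one-dimensional algebraic locus of $(n,m)$, which is avoided by enlarging $c_0$. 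Peeling off all such vanishing sub-sums leaves a residual vanishing relation containing both $a\,a_1(n)\,\alpha_1^n$ and $\epsilon$ that has no further vanishing sub-sum, so the preceding paragraph applies and finishes the proof.
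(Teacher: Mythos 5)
Your proposal founders on the application of Brownawell--Masser. Appending $\epsilon := aG_n - bH_m$ to the list of summands creates a vanishing sum, but $\epsilon$ is not an $S$-unit for any \emph{fixed} finite set $S$: while its poles are controlled, its zeros depend on $(n,m)$ and cannot be captured by an $S$ chosen in advance. (Indeed, if $\epsilon$ were an $S$-unit for a fixed $S$, controlling $\mu(\epsilon)$ would be nearly trivial.) The same problem already afflicts your original summands: for integer $n$ the element $a_i(n) = \sum_k a_{ik}n^k \in F$ is an $F$-linear combination of the $a_{ik}$, and its zeros move with $n$; for example, with $a_1(n) = 1 + nx$ in $\CC(x)$ the place $x = -1/n$ is a zero of $a_1(n)$, so no fixed $S$ contains all of them. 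The paper handles both problems together with two devices you do not use: (i) it decomposes each $a_i(n)$ as $\sum_g P_{ig}(n)\pi_{ig}$ with $P_{ig}\in\CC[n]$ and $\{\pi_{ig}\}_g$ a fixed $\CC$-linearly independent subset of the coefficients, so that each new summand $P_{ig}(n)\pi_{ig}\alpha_i^n$ genuinely is a fixed $S$-unit times a nonzero complex number; and (ii) it replaces Brownawell--Masser by Zannier's function-field subspace theorem (Proposition~\ref{prop:functionfieldsubspace}), which bounds $\sum_{\nu\in S}\bigl(\nu(\sigma) - \min_i\nu(\varphi_i)\bigr)$ for a sum $\sigma$ of $S$-units \emph{without} requiring $\sigma$ itself to be an $S$-unit. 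Brownawell--Masser does enter the paper's argument, but only for genuine vanishing sums of $S$-units that arise when testing the $\CC$-linear independence of the reshaped summands.

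Your treatment of vanishing sub-sums is also too loose to check. Brownawell--Masser yields height bounds, not exact identities like $a\,a_1(n)\alpha_1^n = b\,b_{j_0}(m)\beta_{j_0}^m$; the usable output is a bound $\Hc(\alpha_1^n/\beta_{j_0}^m)\leq\caux+\mathrm{const}$, which via Lemma~\ref{lemma:quotofindep} bounds $\max(n,m)$ directly---there is no ``algebraic locus of $(n,m)$'' to avoid, since $(n,m)$ ranges over integer pairs. You also never rule out $\epsilon=0$, which would give $\mu(\epsilon)=\infty$ and falsify the conclusion; the paper's reduction to $\CC$-linearly independent summands rules this out automatically once $\min(n,m)>c_0$. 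Finally, a vanishing sub-sum containing $\alpha_i$-terms with $i\neq 1$ and $\beta_j$-terms but no $\alpha_1$-term cannot be bounded by your method at all (neither multiplicative independence nor non-degeneracy is available there); the paper eliminates this case by an iterative reduction that rewrites the sum with fewer terms while preserving every $\alpha_1$-term, a mechanism absent from your sketch.
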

	
	The non-degeneracy condition already implies that there is at most one characteristic root in each of the two linear recurrences which is constant.
	If we require all characteristic roots to be non-constant, then we can prove a little bit more:
	\begin{mythm}
		\label{thm:onelarge}
		Let $ G_n = a_1(n) \alpha_1^n + \cdots + a_d(n) \alpha_d^n $ and $ H_m = b_1(m) \beta_1^m + \cdots + b_t(m) \beta_t^m $ be two non-degenerate linear recurrence sequences defined over $ F $.
		Assume that no $ \alpha_i $ as well as no $ \beta_j $ is contained in $ \CC $, and that for any $ j \in \set{1,\ldots,t} $ the pair $ (\alpha_1,\beta_j) $ is multiplicatively independent.
		Furthermore, let $ \mu $ be a valuation on $ F $ such that $ \mu(\alpha_1) \leq \mu(\alpha_i) $ for $ i \in \set{1,\ldots,d} $.
		Fix $ a,b \in F^* $.
		Then there exist effectively computable constants $ c_1 $ and $ C' $, independent of $ n $ and $ m $, such that for $ \max \setb{n,m} > c_1 $ we have
		\begin{equation*}
			\mu(a G_n - b H_m) \leq \mu(G_n) + C'.
		\end{equation*}
	\end{mythm}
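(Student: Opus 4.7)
The natural plan is to bootstrap Theorem~\ref{thm:bothlarge}, which already supplies the desired inequality in the symmetric regime $\min\setb{n,m} > c_0$; what remains is to handle the two asymmetric regimes in which one of the indices is bounded by $c_0$ while the other is large.

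In the regime $n \leq c_0$ and $m > c_1$, the element $\gamma_n := aG_n$ takes only finitely many values in $F$, so $\mu(G_n)$ admits a uniform lower bound $M_1$. Writing $aG_n - bH_m = \gamma_n - bH_m$, I would view this, for each fixed admissible $n$, as a linear recurrence sequence in $m$ whose characteristic roots are $\beta_1,\ldots,\beta_t$ together with the extra root $1$ carrying the constant coefficient $\gamma_n$ (the case $\gamma_n = 0$ being even easier). The strengthened hypothesis that no $\beta_j$ lies in $\CC$, combined with non-degeneracy of $H_m$, forces this augmented recurrence to be non-degenerate, and Proposition~\ref{prop:boundgrowth} then yields
\begin{equation*}
\mu(\gamma_n - bH_m) \leq m \cdot \min\setb{\mu(\beta_1),\ldots,\mu(\beta_t),0} + O(1),
\end{equation*}
whose right-hand side is bounded above by an absolute constant $C_A$ as $m \to \infty$, uniformly over the finitely many admissible $n$. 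Since a non-degenerate linear recurrence over $F$ has only finitely many zeros, enlarging $c_1$ to discard the pairs with $\gamma_n - bH_m = 0$ gives $\mu(aG_n - bH_m) \leq C_A \leq \mu(G_n) + (C_A - M_1)$, closing this case. The regime $m \leq c_0$, $n > c_1$ is entirely symmetric: $\delta_m := bH_m$ is one of finitely many fixed values, $aG_n - \delta_m$ is a non-degenerate recurrence in $n$ with roots $\alpha_1,\ldots,\alpha_d$ and (when $\delta_m \neq 0$) an extra root $1$, Proposition~\ref{prop:boundgrowth} bounds $\mu(aG_n - \delta_m)$ above by $n \cdot \min\setb{\mu(\alpha_1),0} + O(1)$, while the ultrametric inequality applied to $G_n = \sum_i a_i(n)\alpha_i^n$ yields $\mu(G_n) \geq n\mu(\alpha_1) + O(1)$; distinguishing the sign of $\mu(\alpha_1)$ then delivers the required inequality.

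The principal technical obstacle is verifying that the augmented sequences remain non-degenerate; this is precisely where the strengthening of Theorem~\ref{thm:bothlarge}'s hypothesis --- namely that \emph{no} $\alpha_i$ or $\beta_j$ lies in $\CC$ --- is essential, since it guarantees that the ratio of the newly-introduced root $1$ with any original characteristic root is non-constant. Setting $c_1$ as the maximum of $c_0$ and the finitely many indices beyond which the augmented recurrences are non-zero (and large enough for Proposition~\ref{prop:boundgrowth} to apply), and $C'$ as the maximum of $C$ from Theorem~\ref{thm:bothlarge} together with the constants produced in the two asymmetric regimes, then furnishes the claimed effectively computable pair.
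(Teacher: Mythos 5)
Your proposal is correct and follows essentially the same route as the paper's proof: invoke Theorem~\ref{thm:bothlarge} for $\min\setb{n,m} > c_0$, then treat each asymmetric regime by freezing the small index, adjoining the trivial characteristic root $1$ to absorb the constant term, noting that the strengthened no-constant-roots hypothesis keeps the augmented recurrence non-degenerate, and applying Proposition~\ref{prop:boundgrowth} to the one remaining variable. The only cosmetic difference is that the paper avoids your case split on the sign of $\mu(\alpha_1)$ by using the simple inequality $\min_{i\le d+1}\mu(\alpha_i) \le \min_{i\le d}\mu(\alpha_i)$ together with the lower bound of Proposition~\ref{prop:boundgrowth} for $\mu(G_n)$, and it leaves the finitely-many-zeros issue to the ``sufficiently large $n$'' clause of Proposition~\ref{prop:boundgrowth} rather than citing it explicitly as you do.
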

	
	In the case $ \mu(aG_n) \neq \mu(bH_m) $ the inequality directly follows from the strict triangle inequality.
	Thus the power of the above theorems concentrates on the case $ \mu(aG_n) = \mu(bH_m) $.
	There they give a nontrivial upper bound, whereas the trivial lower bound in the case $ \mu(aG_n) = \mu(bH_m) $ is
	\begin{equation*}
		\mu(a G_n - b H_m) \geq \min \setb{\mu(aG_n), \mu(bH_m)} = \mu(aG_n) = \mu(G_n) + \mu(a).
	\end{equation*}
	Rephrased in words, our theorems state that for large indices the recurrence $ H_m $ cannot cancel out too much from $ G_n $ if at least one ``size-determining'' root $ \alpha_1 $ is independent of the roots of $ H_m $.
	
	The assumption that $ \alpha_1 $ is multiplicatively independent of each characteristic root of the second recurrence sequence is needed to avoid situations like $ H_m := G_{2m} $, where $ G_n - H_m $ is zero for $ n = 2m $ arbitrary large, and thus the statement of the theorems cannot hold.
	That things are different if the two considered linear recurrence sequences are too similar, can also be seen in the results of other authors, see e.g.\ \cite{fuchs-petho-2005}.
	Let us mention that, as in Corollary 4 in \cite{fuchs-petho-2005}, we can deduce here that under the assumptions of Theorem \ref{thm:onelarge} the solutions $ (n,m) $ to $ aG_n = bH_m $ are bounded effectively from above.
	
	From Theorem \ref{thm:bothlarge} to Theorem \ref{thm:onelarge} we extended the area, in which the bound for the valuation holds, from $ \min \setb{n,m} > c_0 $ to $ \max \setb{n,m} > c_1 $ to the cost of a little bit stronger assumptions.
	The restriction $ \max \setb{n,m} > c_1 $ cannot be removed completely.
	Indeed, there may be sporadic solutions to $ a G_n - b H_m = 0 $ whence $ \mu(a G_n - b H_m) = \infty $ is possible for small indices.
	
	To illustrate the result, we formulate the following corollary which immediately follows from Theorem \ref{thm:bothlarge} by choosing $ \mu = \nu_{\infty} $ for the function field $ \CC(x) $.
	An analogous corollary can be formulated for Theorem \ref{thm:onelarge}.
	\begin{mycorol}
		\label{cor:polycase}
		Let $ G_n = a_1(n) \alpha_1^n + \cdots + a_d(n) \alpha_d^n $ and $ H_m = b_1(m) \beta_1^m + \cdots + b_t(m) \beta_t^m $ be two non-degenerate linear recurrence sequences of polynomials in $ \CC[x] $ where all the characteristic roots are polynomials as well.
		Assume that $ \alpha_1 \notin \CC $, and that for any $ j \in \set{1,\ldots,t} $ the pair $ (\alpha_1,\beta_j) $ is multiplicatively independent.
		Furthermore, assume that $ \deg \alpha_1 \geq \deg \alpha_i $ for $ i \in \set{1,\ldots,d} $.
		Fix nonzero $ a,b \in \CC[x] $.
		Then there exist effectively computable constants $ c_0 $ and $ C $, independent of $ n $ and $ m $, such that for $ \min \setb{n,m} > c_0 $ we have
		\begin{equation*}
			\deg (a G_n - b H_m) \geq \deg G_n - C.
		\end{equation*}
	\end{mycorol}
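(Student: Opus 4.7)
The plan is to obtain Corollary~\ref{cor:polycase} as a direct instantiation of Theorem~\ref{thm:bothlarge} with the function field $F = \CC(x)$ and the valuation $\mu = \nu_\infty$, translating the valuation inequality of the theorem into the desired degree inequality.

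First I would verify that every hypothesis of Theorem~\ref{thm:bothlarge} is met in the setting of the corollary. Since $G_n$ and $H_m$ are linear recurrence sequences of polynomials with polynomial characteristic roots, they are in particular non-degenerate linear recurrence sequences defined over $F = \CC(x)$. The assumptions $\alpha_1 \notin \CC$ and multiplicative independence of $(\alpha_1,\beta_j)$ are imposed verbatim in both statements, and $a,b$ are taken to be nonzero elements of $\CC[x] \subset F^*$. The only hypothesis that requires a moment's thought is the condition $\mu(\alpha_1) \leq \mu(\alpha_i)$: recalling that for a nonzero polynomial $p(x)$ we have $\nu_\infty(p) = -\deg p$, the corollary's assumption $\deg \alpha_1 \geq \deg \alpha_i$ is exactly $\nu_\infty(\alpha_1) \leq \nu_\infty(\alpha_i)$, as required.

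Next I would apply Theorem~\ref{thm:bothlarge} to obtain effectively computable $c_0, C$ such that for $\min\setb{n,m} > c_0$,
\begin{equation*}
	\nu_\infty(a G_n - b H_m) \leq \nu_\infty(G_n) + C.
\end{equation*}
Since $aG_n - bH_m$ is a polynomial in $\CC[x]$ (a sum of products of polynomials) and the right-hand side is a finite number whenever $G_n \neq 0$, this implies $aG_n - bH_m \neq 0$; in particular $\nu_\infty(aG_n - bH_m) = -\deg(aG_n - bH_m)$ and $\nu_\infty(G_n) = -\deg G_n$. (For large $n$ one has $G_n \neq 0$ by the growth bound recalled as Proposition~\ref{prop:boundgrowth}, since $\alpha_1 \notin \CC$ forces $\deg G_n \to \infty$; if necessary, enlarge $c_0$ to ensure this.) Substituting these identities and multiplying by $-1$ yields $\deg(aG_n - bH_m) \geq \deg G_n - C$, which is the conclusion of the corollary.

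There is no real obstacle beyond carefully matching the data of the corollary with that of Theorem~\ref{thm:bothlarge}; the only minor subtlety is checking that $aG_n - bH_m$ is a polynomial (so that its valuation at infinity coincides with minus its degree) and that $G_n$ does not vanish in the range where we apply the bound, both of which are immediate.
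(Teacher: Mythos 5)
Your proposal is correct and follows exactly the route the paper indicates: the paper states that the corollary ``immediately follows from Theorem~\ref{thm:bothlarge} by choosing $\mu = \nu_\infty$ for the function field $\CC(x)$,'' which is precisely your instantiation, with the sign flip $\nu_\infty(p) = -\deg p$ converting the valuation inequality into the degree inequality. Your extra care about $G_n \neq 0$ and $aG_n - bH_m \neq 0$ is a sensible sanity check but not a departure from the paper's approach.
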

	
	\section{Preliminaries}
	
	In the next section we will make use of height functions in function fields. Let us therefore define the height of an element $ f \in F^* $ by
	\begin{equation*}
		\Hc(f) := - \sum_{\nu} \min \setb{0, \nu(f)} = \sum_{\nu} \max \setb{0, \nu(f)}
	\end{equation*}
	where the sum is taken over all valuations on the function field $ F / \CC $. Additionally we define $ \Hc(0) = \infty $.
	This height function satisfies some basic properties that are listed in the lemma below which is proven in \cite{fuchs-karolus-kreso-2019}:
	\begin{mylemma}
		\label{lemma:heightproperties}
		Denote as above by $ \Hc $ the height on $ F/\CC $. Then for $ f,g \in F^* $ the following properties hold:
		\begin{enumerate}[a)]
			\item $ \Hc(f) \geq 0 $ and $ \Hc(f) = \Hc(1/f) $,
			\item $ \Hc(f) - \Hc(g) \leq \Hc(f+g) \leq \Hc(f) + \Hc(g) $,
			\item $ \Hc(f) - \Hc(g) \leq \Hc(fg) \leq \Hc(f) + \Hc(g) $,
			\item $ \Hc(f^n) = \abs{n} \cdot \Hc(f) $,
			\item $ \Hc(f) = 0 \iff f \in \CC^* $,
			\item $ \Hc(A(f)) = \deg A \cdot \Hc(f) $ for any $ A \in \CC[T] \setminus \set{0} $.
		\end{enumerate}
	\end{mylemma}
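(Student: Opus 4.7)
The plan is to derive all six items from just two ingredients: the place-by-place ultrametric properties $\nu(fg) = \nu(f)+\nu(g)$ and $\nu(f+g) \geq \min \setb{\nu(f),\nu(g)}$, and the global sum formula $\sum_\nu \nu(f) = 0$ together with the fact that constants are units at every place of $F/\CC$. The two expressions for $\Hc(f)$ given in the definition are interchangeable thanks to the sum formula, which splits $\sum_\nu \nu(f)$ into its positive and negative parts.

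Items a)--d) are then bookkeeping: non-negativity of $\Hc$ is immediate from the definition as a sum of non-negative terms, the symmetry $\Hc(1/f)=\Hc(f)$ follows from $\nu(1/f)=-\nu(f)$ which turns the $\max \setb{0,\cdot}$-sum into the $-\min \setb{0,\cdot}$-sum, and the subadditivity statements $\Hc(f+g)\leq \Hc(f)+\Hc(g)$ and $\Hc(fg)\leq \Hc(f)+\Hc(g)$ reduce to pointwise inequalities valid at each $\nu$. The reverse inequalities follow formally from applying the upper bounds to the identities $f = (f+g)+(-g)$ and $f = (fg)\cdot(1/g)$, using $\Hc(-g)=\Hc(g)$ and part~a). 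Part~d) is the trivial observation that $\nu(f^n)=n\nu(f)$ scales each summand by $\abs{n}$, combined with part~a) for negative exponents.

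Item e) has two sides. If $f\in \CC^*$ then $\nu(f)=0$ at every place of $F/\CC$, so $\Hc(f)=0$. Conversely, $\Hc(f)=0$ forces $\nu(f)\geq 0$ at every valuation, which means $f$ has no poles; by the standard fact that an element of a function field over $\CC$ with no poles at all is constant (a consequence of Riemann-Roch, or equivalently the statement that a regular function on a smooth projective curve is constant), $f\in \CC^*$. I would simply invoke this.

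The only item that requires a slightly finer argument is f). Subadditivity applied to the factorisation $A(T)=c\prod_{i=1}^{\deg A}(T-r_i)$ over $\CC$, together with $\Hc(c)=\Hc(r_i)=0$ from e), only yields $\Hc(A(f))\leq \deg A \cdot \Hc(f)$, not equality. To obtain equality I would examine the valuations one by one: at a place $\nu$ with $\nu(f)\geq 0$ the ultrametric inequality forces $\nu(A(f))\geq 0$, while at a pole of $f$ the leading term $c\, f^{\deg A}$ strictly dominates the lower-order ones (because $j\nu(f) > \deg A \cdot \nu(f)$ for $j < \deg A$ when $\nu(f)<0$), so $\nu(A(f)) = \deg A \cdot \nu(f)$ exactly. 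Summing over the pole places of $f$ yields $\Hc(A(f)) = \deg A \cdot \Hc(f)$. This dominating-leading-term step is the main obstacle, since it is the only place where the argument has to promote an ultrametric inequality into a genuine equality; everywhere else, plain subadditivity and the sum formula suffice.
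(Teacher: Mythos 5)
Your proof is correct and complete. Note, however, that the paper does not actually supply a proof of this lemma: it states that the properties ``are listed in the lemma below which is proven in \cite{fuchs-karolus-kreso-2019}'' and simply cites that reference, so there is no in-text argument to compare against. Your standalone derivation from the multiplicativity and ultrametric inequality of valuations, the sum formula, and the fact that a pole-free element of $F$ is constant, is exactly the standard way to verify these height properties. One small point worth making explicit when you write this up: for the bound $\Hc(f+g)\leq \Hc(f)+\Hc(g)$ the pointwise estimate must be carried out in the form $-\min\setb{0,\nu(f+g)}\leq -\min\setb{0,\nu(f)}-\min\setb{0,\nu(g)}$ (i.e.\ counting poles), since the corresponding $\max$-form inequality $\max\setb{0,\nu(f+g)}\leq\max\setb{0,\nu(f)}+\max\setb{0,\nu(g)}$ can fail at a place where cancellation produces a high-order zero; the two global expressions for $\Hc$ agree by the sum formula, so the conclusion is unaffected, but the place-by-place reasoning only works on the pole side. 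Your treatment of part f) is the genuine content of the lemma and you handle it correctly: the leading term $c f^{\deg A}$ strictly dominates at every pole of $f$, giving $\nu(A(f))=\deg A\cdot\nu(f)$ there, while $\nu(A(f))\geq 0$ at non-poles, so the pole divisor of $A(f)$ is exactly $\deg A$ times that of $f$. (One could also remark that part f) tacitly assumes $A(f)\neq 0$, i.e.\ $f$ is not a root of $A$ in $\CC$, else $\Hc(A(f))=\infty$; this is consistent with the convention $\Hc(0)=\infty$ and harmless in the paper's applications.)
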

	
	Moreover, the following result due to Brownawell and Masser will be used when proving our statements. It is an immediate consequence of Theorem B in \cite{brownawell-masser-1986}:
	\begin{myprop}[Brownawell-Masser]
		\label{prop:brownawellmasser}
		Let $ F/\CC $ be a function field in one variable of genus $ \gfr $. Moreover, for a finite set $ S $ of valuations, let $ u_1,\ldots,u_k $ be $ S $-units and
		\begin{equation*}
			1 + u_1 + \cdots + u_k = 0,
		\end{equation*}
		where no proper subsum of the left hand side vanishes. Then we have
		\begin{equation*}
			\max_{i=1,\ldots,k} \Hc(u_i) \leq \bino{k}{2} \left( \abs{S} + \max \setb{0, 2\gfr-2} \right).
		\end{equation*}
	\end{myprop}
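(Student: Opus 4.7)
The plan is to derive this proposition directly from Theorem~B of Brownawell and Masser in \cite{brownawell-masser-1986}. Their theorem concerns vanishing $S$-unit relations in function fields of characteristic zero and produces a height bound of precisely the same shape, namely $\binom{\cdot}{2}(|S| + \max\{0, 2\gfr - 2\})$, as appears in the statement. Consequently, the entire content of the proof is a translation between the two formulations, and no new estimates are needed.

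First I would rewrite the given relation $1 + u_1 + \cdots + u_k = 0$ in the exact normalization used by Brownawell-Masser; depending on their convention this means either keeping the $``= 0''$ form or bringing a term across to obtain $(-u_1) + \cdots + (-u_k) = 1$. Since $-1 \in \CC^* \subset \Oc_S^*$, each $-u_i$ is again an $S$-unit, and the hypothesis that no proper subsum of the original relation vanishes is exactly the non-degeneracy hypothesis required by their theorem. Applying Theorem~B with $k$ unknowns then yields, for every $i$, the bound $\Hc(-u_i) \leq \binom{k}{2}(|S| + \max\{0, 2\gfr - 2\})$. Invoking part~(a) of Lemma~\ref{lemma:heightproperties} to remove the sign and taking the maximum over $i$ finishes the argument.

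The only step requiring care is matching the two height conventions: one has to verify that Brownawell and Masser's height (typically defined as the degree of the pole divisor of the element) agrees with the $\Hc$ used here. But both equal $-\sum_\nu \min\{0, \nu(f)\}$, so this is pure bookkeeping. One should also double-check that their combinatorial constant indeed comes out as $\binom{k}{2}$ in the present normalization (where $k$ counts the $u_i$ and not the total number of summands), which is a matter of reading off their statement. There is thus no substantive obstacle: the proposition is, as the author indicates, an immediate consequence of the cited result.
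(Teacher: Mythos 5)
Your proposal is correct and follows exactly the route the paper takes: the paper offers no independent proof of this proposition but declares it ``an immediate consequence of Theorem~B in \cite{brownawell-masser-1986}.'' The translation steps you describe (absorbing signs into $S$-units via $-1\in\CC^*$, matching the non-degeneracy hypothesis, and checking that the degree-of-pole-divisor height coincides with $\Hc$) are precisely the bookkeeping implicit in the paper's citation.
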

	
	Furthermore, we will use the following function field analogue of the Schmidt subspace theorem. A proof can be found in \cite{zannier-2008}:
	\begin{myprop}[Zannier]
		\label{prop:functionfieldsubspace}
		Let $ F/\CC $ be a function field in one variable, of genus $ \gfr $, let $ \varphi_1, \ldots, \varphi_n \in F $ be linearly independent over $ \CC $ and let $ r \in \set{0,1, \ldots, n} $. Let $ S $ be a finite set of places of $ F $ containing all the poles of $ \varphi_1, \ldots, \varphi_n $ and all the zeros of $ \varphi_1, \ldots, \varphi_r $. Put $ \sigma = \sum_{i=1}^{n} \varphi_i $. Then
		\begin{equation*}
			\sum_{\nu \in S} \left( \nu(\sigma) - \min_{i=1, \ldots, n} \nu(\varphi_i) \right) \leq \bino{n}{2} (\abs{S} + 2\gfr - 2) + \sum_{i=r+1}^{n}  \Hc(\varphi_i).
		\end{equation*}
	\end{myprop}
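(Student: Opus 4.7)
The plan is to follow the Wronskian-based approach to function field subspace theorems, as developed by Mason, Brownawell--Masser, and Zannier.

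First, I would reduce to the case $r = n$. The point is that the last $n - r$ summands of $\sigma$ do not need to have their zeros or poles contained in $S$, so their contribution to the local defect $\nu(\sigma) - \min_i \nu(\varphi_i)$ at each $\nu \in S$ can be estimated directly via heights: splitting $\sigma = (\varphi_1 + \cdots + \varphi_r) + \varphi_{r+1} + \cdots + \varphi_n$ and handling the trailing terms by the trivial inequality $\nu(\varphi_i) \geq -\Hc(\varphi_i)$ absorbs the $\sum_{i=r+1}^n \Hc(\varphi_i)$ term on the right-hand side. What remains is to prove the result when $S$ contains all zeros and poles of each $\varphi_i$, i.e.\ when the $\varphi_i$ are $S$-units.

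Next, I fix a nonzero derivation $D \colon F \to F$ over $\CC$ and form the Wronskian $W = \det \bigl( D^{j-1}\varphi_i \bigr)_{1 \leq i, j \leq n}$. Linear independence of $\varphi_1, \ldots, \varphi_n$ over $\CC$ forces $W \neq 0$, which is the decisive non-vanishing input (the function-field analogue of the non-vanishing of Roth's auxiliary polynomial). The central local observation is that, for each place $\nu \in S$, expanding $W$ along a column and substituting $\sigma = \sum_i \varphi_i$ yields a bound of the shape
\[
\nu(\sigma) - \min_i \nu(\varphi_i) \leq \nu(W) - \sum_{i=1}^{n} \nu(\varphi_i) + \bino{n}{2},
\]
where the additive $\bino{n}{2}$ reflects the combined order of the derivations $D^0, D^1, \ldots, D^{n-1}$ appearing in $W$. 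Summing over $\nu \in S$, the $S$-unit hypothesis makes $\sum_{\nu \in S}\nu(\varphi_i) = 0$ for each $i$, leaving a bound in terms of $\sum_{\nu \in S} \nu(W)$ plus $\bino{n}{2}\,|S|$.

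Finally, to control $\sum_{\nu \in S}\nu(W)$ one applies the sum formula $\sum_\nu \nu(W) = 0$ together with an estimate on $\nu(W)$ for $\nu \notin S$: at such a place the $\varphi_i$ are regular and nowhere vanishing, so the only possible pole of $W$ comes from the derivation applied to a local uniformizer, whose divisor is controlled by the canonical class. Riemann--Roch then turns this into a global contribution of $\bino{n}{2}(2\gfr - 2)$, and combining with the previous step yields the claimed inequality. The main obstacle is pinning down the sharp constants $\bino{n}{2}$ and $2\gfr - 2$ simultaneously: this requires a careful combinatorial analysis of the Wronskian expansion for the local bound, and an exact application of Riemann--Roch to the divisor of $W$ rather than a coarse degree count.
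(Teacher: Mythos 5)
The paper does not give its own proof of this proposition; it explicitly cites \cite{zannier-2008}, so there is no in-paper argument to compare against. Your Wronskian sketch is indeed the standard route for Brownawell--Masser type results (a local estimate $\nu(W) \geq \sum_i \nu(\varphi_i) - \binom{n}{2}\,\lambda_\nu$ with $\lambda_\nu$ governed by the derivation, the product formula applied to $W$, and Riemann--Roch or Riemann--Hurwitz to control the global ramification contribution by $2\gfr - 2$), and it is the method Zannier uses. So the main engine of the proposal is sound.

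However, the proposed reduction to $r = n$ has a genuine gap. Enlarging $S$ to $S' = S \cup \{\text{zeros of } \varphi_{r+1}, \ldots, \varphi_n\}$ and then invoking the $S'$-unit case produces a bound of the form $\binom{n}{2}\bigl(|S'| + 2\gfr - 2\bigr) \leq \binom{n}{2}\bigl(|S| + 2\gfr - 2\bigr) + \binom{n}{2}\sum_{i>r}\Hc(\varphi_i)$, which is off by a factor of $\binom{n}{2}$ on the height term relative to the stated inequality. On the other hand, the variant you actually describe --- splitting $\sigma$ as $(\varphi_1 + \cdots + \varphi_r) + \sum_{i>r}\varphi_i$ --- does not yield the desired upper bound on $\nu(\sigma) - \min_i\nu(\varphi_i)$, because $\nu$ of a partial sum gives a lower bound on $\nu(\sigma)$, not an upper bound (there could be cancellation in $\varphi_1+\cdots+\varphi_r$ that is repaired by the remaining terms, making $\nu(\sigma)$ much smaller than $\nu(\varphi_1+\cdots+\varphi_r)$). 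The correct way the $\sum_{i>r}\Hc(\varphi_i)$ term enters is without any reduction: you run the Wronskian argument for the full tuple $\varphi_1,\ldots,\varphi_n$ over the original $S$, and in the step where you would normally use $\sum_{\nu \in S}\nu(\varphi_i) = 0$ (valid for $S$-units, i.e.\ $i \leq r$), you instead use $\sum_{\nu \in S}\nu(\varphi_i) \geq -\Hc(\varphi_i)$ for $i > r$ (valid since all poles, though not all zeros, of $\varphi_i$ lie in $S$). That single modification is what produces the linear-in-height term with coefficient $1$ rather than $\binom{n}{2}$.
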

	
	In addition, the next proposition will be applied in our proofs. It is proven as Theorem 1 in \cite{fuchs-heintze-p3} and we state it here in a combined version with the paragraph immediately before Theorem 1 in \cite{fuchs-heintze-p3}:
	\begin{myprop}
		\label{prop:boundgrowth}
		Let $ (G_n)_{n=0}^{\infty} $ be a non-degenerate linear recurrence sequence taking values in $ F $ with power sum representation $ G_n = a_1(n) \alpha_1^n + \cdots + a_t(n) \alpha_t^n $. Let $ L $ be the splitting field of the characteristic polynomial of that sequence, i.e.\ $ L = F(\alpha_1, \ldots, \alpha_t) $. Moreover, let $ \mu $ be a valuation on $ L $. Then there are effectively computable constants $ C^+ $ and $ C^- $, independent of $ n $, such that for every sufficiently large $ n $ the inequality
		\begin{equation*}
			C^- + n \cdot \min_{j=1,\ldots,t} \mu(\alpha_j) \leq \mu(G_n) \leq C^+ + n \cdot \min_{j=1,\ldots,t} \mu(\alpha_j)
		\end{equation*}
		holds.
	\end{myprop}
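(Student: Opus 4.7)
My plan is to prove the two inequalities separately. For the lower bound, which is essentially immediate, I would expand each polynomial coefficient as $a_i(n)=\sum_k c_{i,k}n^k$ with $c_{i,k}\in L$ and use that $\mu$ vanishes on $\CC^*$, so $\mu(n^k)=0$ for $n\ge 1$. The ultrametric inequality then gives $\mu(a_i(n))\ge \min_k\mu(c_{i,k})=:K_i$, and a second application yields
\[
\mu(G_n)\ge\min_i\bigl(\mu(a_i(n))+n\mu(\alpha_i)\bigr)\ge\min_i K_i+n\min_j\mu(\alpha_j),
\]
so one may take $C^-:=\min_i K_i$.

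For the upper bound, I would normalize by $\alpha_1^n$, after relabeling so that $\mu(\alpha_1)=\min_j\mu(\alpha_j)$, and reduce to bounding $\mu(\tilde G_n)$ from above for
\[
\tilde G_n:=G_n/\alpha_1^n=a_1(n)+\sum_{i=2}^{t}a_i(n)(\alpha_i/\alpha_1)^n.
\]
Expanding each $a_i(n)$ into its monomials produces a sum $\tilde G_n=\sum_{\ell=1}^{N}\varphi_\ell$, with $N$ independent of $n$ and each $\varphi_\ell$ of the form $c_{i,k}n^k(\alpha_i/\alpha_1)^n$. I would fix a finite set $S$ of valuations of $L$ containing $\mu$ together with all zeros and poles of every $c_{i,k}$ and of every $\alpha_i/\alpha_1$; then each $\varphi_\ell$ is an $S$-unit for every $n\ge 1$. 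An application of Zannier's theorem (Proposition~\ref{prop:functionfieldsubspace}) with $r=N$, so that the height contribution on the right-hand side vanishes, would then give
\[
\mu(\tilde G_n)-\min_\ell\mu(\varphi_\ell)\le\sum_{\nu\in S}\bigl(\nu(\tilde G_n)-\min_\ell\nu(\varphi_\ell)\bigr)\le\bino{N}{2}(|S|+2\gfr-2),
\]
since each summand in the middle expression is non-negative by the ultrametric inequality. A nonzero monomial of $a_1$ yields a term $\varphi_\ell=c_{1,k_0}n^{k_0}$ with $\mu(\varphi_\ell)=\mu(c_{1,k_0})$ constant in $n$, and thus $\mu(\tilde G_n)\le C^+$.

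The main obstacle is the $\CC$-linear independence hypothesis that Zannier's theorem requires. For exceptional values of $n$ a relation $\sum_\ell\lambda_\ell\varphi_\ell=0$ with $\lambda_\ell\in\CC$ may hold. I would handle this by inductively discarding maximal vanishing subsums: any such subsum, after dividing through by one of its terms, becomes an $S$-unit equation in the ratios $(\alpha_i/\alpha_j)^n$ with a bounded number of summands, to which Brownawell--Masser (Proposition~\ref{prop:brownawellmasser}) applies. Combined with non-degeneracy, $\alpha_i/\alpha_j\notin\CC$, this forces the exceptional values of $n$ into a finite, effectively bounded set, which may be absorbed into $C^+$ by enlarging it if necessary. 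The substantive input is thus the subspace theorem together with non-degeneracy; the rest is careful bookkeeping of the polynomial coefficients.
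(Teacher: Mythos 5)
The paper does not give its own proof of Proposition~\ref{prop:boundgrowth}; it is quoted from \cite{fuchs-heintze-p3}. I can, however, evaluate your argument on its merits and against the toolbox the paper itself deploys in the proof of Theorem~\ref{thm:bothlarge}, which is the same (normalize by the dominant root, decompose into $S$-unit summands, Zannier's subspace theorem for the valuation bound, Brownawell--Masser to control $\CC$-linear dependences). Your lower-bound argument is correct, and your upper-bound strategy is the right one.

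There is, however, a genuine gap in how you prepare the summands for Zannier's theorem. Expanding $a_i(n)$ into raw monomials $c_{i,k}n^k$ gives no guarantee that the $c_{i,k}$ with a fixed $i$ are $\CC$-linearly independent, and this breaks your Brownawell--Masser fallback for eliminating the exceptional $n$. Concretely, if $a_i(n)=c(1+n)$ then $c_{i,0}=c_{i,1}=c$, so $\varphi_1=c(\alpha_i/\alpha_1)^n$ and $\varphi_2=cn(\alpha_i/\alpha_1)^n=n\,\varphi_1$ are $\CC$-dependent for \emph{every} $n\ge 1$. When a minimal vanishing subsum is supported on a single $i$, dividing by one of its terms produces a relation among the ratios $c_{i,k}/c_{i,k'}$ with no factor $(\alpha_i/\alpha_j)^n$ at all, so Brownawell--Masser merely bounds $\Hc(c_{i,k}/c_{i,k'})$, which is a constant and gives no constraint on $n$. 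Thus the set of ``exceptional'' $n$ need not be finite, and your claimed reduction does not go through. The fix is exactly the preprocessing the paper performs in the proof of Theorem~\ref{thm:bothlarge}: for each $i$, choose a maximal $\CC$-linearly independent subset $\set{\pi_{i1},\ldots,\pi_{ik_i}}$ of the coefficients of $a_i$ and rewrite $a_i(n)=\sum_{g}P_{ig}(n)\pi_{ig}$ with $P_{ig}\in\CC[n]$. Then a vanishing subsum supported on a single $i$ is impossible for $n\ge 1$ (it would contradict the $\CC$-independence of the $\pi_{ig}$), so every minimal vanishing subsum involves two distinct roots $\alpha_i,\alpha_j$, and non-degeneracy together with $\Hc(\alpha_i/\alpha_j)>0$ yields the effective bound on $n$. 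A smaller bookkeeping point: the finitely many exceptional $n$ must be \emph{excluded}, not absorbed into $C^+$ --- for such $n$ one can have $G_n=0$ and hence $\mu(G_n)=\infty$, which no constant $C^+$ can accommodate; this is precisely what the ``for every sufficiently large $n$'' clause in the statement is for.
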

	Note that an inspection of the proof of the last proposition shows that it is possible to calculate a (admittedly rather complicated) bound $ N_0 $ such that ``sufficiently large $ n $'' can be replaced by $ n \geq N_0 $.
	
	Last but not least, we will need the following small lemma about multiplicatively independent elements, which is proven in \cite{fuchs-heintze-p8}:
	\begin{mylemma}
		\label{lemma:quotofindep}
		Let $ \gamma, \delta \in F \setminus \CC $ be multiplicatively independent and $ n,m \in \NN $. Assume that
		\begin{equation*}
			\Hc \left( \frac{\gamma^n}{\delta^m} \right) \leq L.
		\end{equation*}
		Then there exists an effectively computable constant $ L' $, depending only on $ \gamma, \delta, \gfr $ and $ L $, such that
		\begin{equation*}
			\max \setb{n,m} \leq L'.
		\end{equation*}
	\end{mylemma}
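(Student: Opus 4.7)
The plan is to exploit multiplicative independence via linear algebra on valuation vectors, reducing the bounded-height hypothesis to an invertible $2 \times 2$ linear system for $n$ and $m$.

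First, I would observe that two elements $f,g \in F^*$ have proportional principal divisors if and only if their ratio is a nonzero constant (since the nonzero constants $\CC^*$ are exactly the elements of $F^*$ with trivial divisor). Hence the hypothesis that $\gamma, \delta \in F \setminus \CC$ are multiplicatively independent translates into the statement that $\mathrm{div}(\gamma)$ and $\mathrm{div}(\delta)$ are $\ZZ$-linearly independent divisors. In particular, the integer vectors $(\nu(\gamma), \nu(\delta))$, as $\nu$ ranges over all valuations of $F$, cannot all lie on a common line through the origin; equivalently, there exist two valuations $\nu_1, \nu_2$ of $F$ with
\begin{equation*}
\Delta := \nu_1(\gamma)\nu_2(\delta) - \nu_2(\gamma)\nu_1(\delta) \neq 0.
\end{equation*}
Such $\nu_1, \nu_2$ and the integer $\Delta$ can be determined effectively by searching among the finitely many places in the supports of $\mathrm{div}(\gamma)$ and $\mathrm{div}(\delta)$.

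Second, for any $f \in F^*$ and any single valuation $\nu$, the definition of the height immediately gives $\abs{\nu(f)} \leq \Hc(f)$: if $\nu(f) \geq 0$ then $\nu(f)$ is one of the summands in $\sum_{\nu} \max \setb{0,\nu(f)}$, and if $\nu(f) < 0$ then $-\nu(f)$ is one of the summands in $-\sum_{\nu} \min \setb{0,\nu(f)}$. Applying this to $f = \gamma^n/\delta^m$ at both $\nu_1$ and $\nu_2$ produces
\begin{equation*}
\abs{n\nu_i(\gamma) - m\nu_i(\delta)} \leq \Hc(\gamma^n/\delta^m) \leq L \qquad (i = 1,2).
\end{equation*}
This is a linear system in the two unknowns $n, m$ whose coefficient matrix has determinant $\pm \Delta \neq 0$; Cramer's rule then yields
\begin{equation*}
\max \setb{n,m} \leq \frac{L}{\abs{\Delta}} \cdot \bigl( \abs{\nu_1(\gamma)} + \abs{\nu_2(\gamma)} + \abs{\nu_1(\delta)} + \abs{\nu_2(\delta)} \bigr) =: L',
\end{equation*}
which is the desired effectively computable bound depending only on $\gamma, \delta$ and $L$.

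The argument is short and uses neither the Brownawell--Masser bound nor the function-field subspace theorem; the sole conceptual step is the translation of multiplicative independence into divisor-theoretic independence, after which the conclusion falls out of $2 \times 2$ Cramer. I do not foresee any real obstacle, since the effectivity of $L'$ relies only on locating a pair of valuations in the (finite, effectively computable) union of the supports of $\mathrm{div}(\gamma)$ and $\mathrm{div}(\delta)$ at which the valuation matrix is non-singular.
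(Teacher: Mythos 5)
Your proof is correct. The key translation --- multiplicative independence of $\gamma,\delta$ over $\CC$ is equivalent to $\ZZ$-linear (hence $\QQ$-linear) independence of the divisors $\mathrm{div}(\gamma)$ and $\mathrm{div}(\delta)$, because $F^*\cap\CC$-cosets are exactly detected by trivial divisors --- is exactly right, and it does yield a nonvanishing $2\times 2$ minor of the valuation matrix at a pair of places $\nu_1,\nu_2$ drawn from the union of the supports. The inequality $\abs{\nu(f)}\le\Hc(f)$ for a single place $\nu$ is immediate from the two equal expressions for $\Hc$, and Cramer's rule then bounds $n$ and $m$ as you say; the resulting $L'$ is effectively computable from the divisors of $\gamma$ and $\delta$. (Your $L'$ makes no use of the genus $\gfr$, which the lemma permits.) Note that the paper itself does not prove this lemma but cites it from the Fuchs--Heintze paper \cite{fuchs-heintze-p8}, so there is no in-text proof to compare against; your argument is a clean, elementary, self-contained route that avoids the heavier unit-equation machinery (Brownawell--Masser, the function-field subspace theorem) used elsewhere in the paper, which is a genuine economy.
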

	
	\section{Proofs}
	
	We have prepared all auxiliary results needed for proving our theorems.
	Thus we can start with the proof of our first theorem.
	\begin{proof}[Proof of Theorem \ref{thm:bothlarge}]
		First note that $ a G_n $ is again a non-degenerate linear recurrence sequence with the same characteristic roots as $ G_n $ and that $ \mu(aG_n) = \mu(a) + \mu(G_n) $.
		The analogue holds for $ b H_m $.
		So, without loss of generality, we may assume that $ a = b = 1 $.
		
		Let us rewrite the linear recurrence sequences in a more suitable manner.
		With
		\begin{equation*}
			a_i(n) = \sum_{k=0}^{e_i} a_{ik} n^k
		\end{equation*}
		we can write
		\begin{equation}
			\label{eq:Gnfirst}
			G_n = \sum_{i=1}^{d} a_i(n) \alpha_i^n = \sum_{i=1}^{d} \sum_{k=0}^{e_i} a_{ik} n^k \alpha_i^n.
		\end{equation}
		Now fix for each $ i \in \set{1,\ldots,d} $ a maximal $ \CC $-linear independent subset $ \set{\pi_{i1}, \ldots, \pi_{ik_i}} $ of $ \set{a_{i0}, \ldots, a_{ie_i}} $.
		Using these elements, we can write \eqref{eq:Gnfirst} as
		\begin{equation*}
			G_n = \sum_{i=1}^{d} \sum_{g=1}^{k_i} P_{ig}(n) \pi_{ig} \alpha_i^n
		\end{equation*}
		for polynomials $ P_{ig}(n) \in \CC[n] $.
		Analogously, we get
		\begin{equation*}
			H_m = \sum_{j=1}^{t} \sum_{h=1}^{\ell_j} Q_{jh}(m) \psi_{jh} \beta_j^m
		\end{equation*}
		where $ Q_{jh}(m) \in \CC[m] $ are polynomials and $ \set{\psi_{j1}, \ldots, \psi_{j\ell_j}} $ is linearly independent over $ \CC $ for any $ j \in \set{1,\ldots,t} $.
		Together these representations yield
		\begin{equation}
			\label{eq:GnHm}
			G_n - H_m = \sum_{i=1}^{d} \sum_{g=1}^{k_i} P_{ig}(n) \pi_{ig} \alpha_i^n - \sum_{j=1}^{t} \sum_{h=1}^{\ell_j} Q_{jh}(m) \psi_{jh} \beta_j^m.
		\end{equation}
		
		In order to be able to apply Proposition \ref{prop:functionfieldsubspace} we would need the summands in \eqref{eq:GnHm} to be linearly independent over $ \CC $.
		Therefore we will check this in the sequel and make changes where necessary.
		The procedure for doing so is as follows:
		We assume that we have given an arbitrary but fixed pair $ (n,m) $ of indices and, considering several cases, deduce that then either $ \min \setb{n,m} \leq c_0 $, which falls out of the scope of the statement where we only say something for $ \min \setb{n,m} > c_0 $, or a related (but in general slightly modified) sum to \eqref{eq:GnHm} consists of $ \CC $-linear independent summands.
		During this procedure, the bound $ c_0 $ will be updated several (but only finitely many) times without changing its label, i.e.\ it is always denoted by $ c_0 $.
		As an initial value we choose $ c_0 $ large enough such that
		\begin{equation*}
			\prod_{i=1}^{d} \prod_{g=1}^{k_i} P_{ig}(n) \cdot \prod_{j=1}^{t} \prod_{h=1}^{\ell_j} Q_{jh}(m)
		\end{equation*}
		is nonzero whenever $ \min \setb{n,m} > c_0 $.
		
		Now suppose that the summands in \eqref{eq:GnHm} are linearly dependent over $ \CC $.
		Then we have complex numbers $ \lambda_{ig}, \gamma_{jh} \in \CC $, not all zero, such that
		\begin{equation}
			\label{eq:lindep}
			\sum_{i=1}^{d} \sum_{g=1}^{k_i} \lambda_{ig} P_{ig}(n) \pi_{ig} \alpha_i^n + \sum_{j=1}^{t} \sum_{h=1}^{\ell_j} \gamma_{jh} Q_{jh}(m) \psi_{jh} \beta_j^m = 0.
		\end{equation}
		Note that the $ \lambda_{ig} $ and $ \gamma_{jh} $ may depend on $ (n,m) $ which we assume as fixed for this consideration.
		Now we consider a minimal vanishing subsum of \eqref{eq:lindep}, i.e.\ no subsubsum of this subsum vanishes.
		In particular, all $ \lambda_{ig} $ and $ \gamma_{jh} $ appearing in this minimal vanishing subsum are nonzero.
		Moreover, we fix a finite set $ S $ of valuations such that all $ \alpha_i, \beta_j, \pi_{ig} $ and $ \psi_{jh} $ are $ S $-units, and such that $ \mu \in S $, and define the constant
		\begin{equation*}
			\caux := \bino{\sum_{i=1}^{d} k_i + \sum_{j=1}^{t} \ell_j}{2} \left( \abs{S} + \max \setb{0, 2 \gfr - 2} \right).
		\end{equation*}
		Both, $ S $ and $ \caux $ are independent of $ n $ and $ m $.
		We distinguish between six cases:
		
		\textbf{Case 1:}
		The minimal vanishing subsum contains only summands with the same factor $ \alpha_i^n $.
		Recalling that $ \set{\pi_{i1}, \ldots, \pi_{ik_i}} $ is linearly independent over $ \CC $, we see that this case is not possible.
		
		\textbf{Case 2:}
		The minimal vanishing subsum contains only summands with the same factor $ \beta_j^m $.
		Recalling that $ \set{\psi_{j1}, \ldots, \psi_{j\ell_j}} $ is linearly independent over $ \CC $, we see that this case is also not possible.
		
		\textbf{Case 3:}
		The minimal vanishing subsum contains summands with the factors $ \alpha_i^n $ and $ \alpha_j^n $, respectively, where $ i \neq j $.
		Dividing the minimal vanishing subsum by a summand containing the factor $ \alpha_j^n $ and then applying Proposition \ref{prop:brownawellmasser} (note that all summands are $ S $-units since $ \lambda_{ig}, P_{ig}(n), \gamma_{jh}, Q_{jh}(m) \in \CC $) yields
		\begin{equation*}
			\Hc \left( \frac{\lambda_{ig} P_{ig}(n) \pi_{ig} \alpha_i^n}{\lambda_{jg'} P_{jg'}(n) \pi_{jg'} \alpha_j^n} \right) \leq \caux
		\end{equation*}
		for some indices $ g,g' $.
		By Lemma \ref{lemma:heightproperties}, this implies
		\begin{equation*}
			n \cdot \Hc \left( \frac{\alpha_i}{\alpha_j} \right) = \Hc \left( \frac{\alpha_i^n}{\alpha_j^n} \right) \leq \caux + \Hc \left( \frac{\pi_{ig}}{\pi_{jg'}} \right)
		\end{equation*}
		and, since $ G_n $ is non-degenerate, further
		\begin{equation}
			\label{eq:case3}
			n \leq \frac{\caux + \max_{i,j,g,g'} \Hc \left( \dfrac{\pi_{ig}}{\pi_{jg'}} \right)}{\min_{i \neq j} \Hc \left( \dfrac{\alpha_i}{\alpha_j} \right)}.
		\end{equation}
		The upper bound in \eqref{eq:case3} is independent of $ n $ and $ m $ and thus, for an updated $ c_0 $ we get $ \min \setb{n,m} \leq n \leq c_0 $.
		
		\textbf{Case 4:}
		The minimal vanishing subsum contains summands with the factors $ \beta_i^m $ and $ \beta_j^m $, respectively, where $ i \neq j $.
		This case is handled completely analogously to the previous one.
		
		\textbf{Case 5:}
		The minimal vanishing subsum contains summands with the factors $ \alpha_1^n $ and $ \beta_j^m $, respectively.
		Dividing the minimal vanishing subsum by a summand containing the factor $ \beta_j^m $ and then applying Proposition \ref{prop:brownawellmasser} yields
		\begin{equation*}
			\Hc \left( \frac{\lambda_{1g} P_{1g}(n) \pi_{1g} \alpha_1^n}{\gamma_{jh} Q_{jh}(m) \psi_{jh} \beta_j^m} \right) \leq \caux
		\end{equation*}
		for some indices $ g,h $.
		By Lemma \ref{lemma:heightproperties}, this implies
		\begin{equation*}
			\Hc \left( \frac{\alpha_1^n}{\beta_j^m} \right) \leq \caux + \Hc \left( \frac{\pi_{1g}}{\psi_{jh}} \right).
		\end{equation*}
		From this we get either, again by Lemma \ref{lemma:heightproperties},
		\begin{equation*}
			n \leq \frac{\caux + \max_{j,g,h} \Hc \left( \dfrac{\pi_{1g}}{\psi_{jh}} \right)}{\Hc (\alpha_1)}
		\end{equation*}
		if $ \beta_j \in \CC $, or, by Lemma \ref{lemma:quotofindep},
		\begin{equation*}
			\max \setb{n,m} \leq L'
		\end{equation*}
		if $ \beta_j \notin \CC $.
		In both subcases, the upper bound is independent of $ n $ and $ m $, and thus we get $ \min \setb{n,m} \leq c_0 $, for an updated $ c_0 $.
		
		\textbf{Case 6:}
		The minimal vanishing subsum contains summands with the factors $ \alpha_i^n $ and $ \beta_j^m $, respectively, where $ i \neq 1 $.
		In particular, we may assume that no summand with a factor $ \alpha_1^n $ is contained.
		Then we can dissolve the minimal vanishing subsum after one of the appearing terms of the shape $ Q_{jh}(m) \psi_{jh} \beta_j^m $, i.e.\ express this term by a $ \CC $-linear combination of the remaining terms in this subsum.
		Now we insert this expression for $ Q_{jh}(m) \psi_{jh} \beta_j^m $ into \eqref{eq:GnHm}, summarize terms which differ only by a constant factor, and get recurrences $ G_n' $ as well as $ H_m' $ with the following properties:
		We have $ G_n - H_m = G_n' - H_m' $ for the considered pair $ (n,m) $, all expressions of the shape $ \pi_{ig} \alpha_i^n $ or $ \psi_{jh} \beta_j^m $ appearing in $ G_n' - H_m' $ also appear in $ G_n - H_m $ (in general with different coefficients in $ \CC $), no summand containing $ \pi_{1g} \alpha_1^n $ got lost, and $ G_n' - H_m' $ has less summands than $ G_n - H_m $.
		
		Next we check whether the summands in $ G_n' - H_m' $ are linearly independent over $ \CC $.
		If not, then we do the same as we have done above with $ G_n - H_m $.
		Observe that we are automatically in Case 6 again since we are only interested in $ \min \setb{n,m} > c_0 $.
		Here we perform the same reduction process to get $ G_n'' - H_m'' $.
		As in each reduction process the number of summands reduces, this iteration ends after finitely many steps, and after renumbering terms (note that $ \alpha_1 $ stays $ \alpha_1 $ since terms containing $ \alpha_1 $ can not be removed during the reduction process) we get
		\begin{equation}
			\label{eq:star}
			G_n - H_m = G_n^* - H_m^* := \sum_{i=1}^{d^*} \sum_{g=1}^{k_i^*} P_{ig}^*(n) \pi_{ig} \alpha_i^n - \sum_{j=1}^{t^*} \sum_{h=1}^{\ell_j^*} Q_{jh}^*(m) \psi_{jh} \beta_j^m.
		\end{equation}
		Note that $ d^* \geq 1 $ and $ k_1^* \geq 1 $, i.e.\ $ \alpha_1 $ appears on the right hand side.
		The summands in the expression on the right hand side of equation \eqref{eq:star} are now linearly independent over $ \CC $ because we only consider $ \min \setb{n,m} > c_0 $ and no further reduction steps were possible.
		Nevertheless, which summands from $ G_n - H_m $ still appear in $ G_n^* - H_m^* $ may depend on the considered pair $ (n,m) $.
		However, this will not be a problem in the sequel since the number of summands is bounded uniformly (cf.\ our definition of $ \caux $).
		
		At this point we are now able to apply Proposition \ref{prop:functionfieldsubspace}.
		By our choice of $ S $, each summand of the right hand side of equation \eqref{eq:star} is an $ S $-unit.
		Put
		\begin{equation*}
			r_1 := \sum_{i=1}^{d^*} k_i^* \qquad \text{as well as} \qquad r_2 := \sum_{j=1}^{t^*} \ell_j^*
		\end{equation*}
		and set
		\begin{equation*}
			(\varphi_1, \ldots, \varphi_{r_1}) := \left( P_{ig}^*(n) \pi_{ig} \alpha_i^n \right)_{i,g}
		\end{equation*}
		for an arbitrary ordering of the summands of $ G_n^* $ as well as
		\begin{equation*}
			(\varphi_{r_1+1}, \ldots, \varphi_{r_1+r_2}) := \left( -Q_{jh}^*(m) \psi_{jh} \beta_j^m \right)_{j,h}
		\end{equation*}
		for an arbitrary ordering of the summands of $ H_m^* $.
		With this notation, Proposition~\ref{prop:functionfieldsubspace} implies
		\begin{equation}
			\label{eq:valbound}
			\sum_{\nu \in S} \left( \nu(G_n^* - H_m^*) - \min_{z=1,\ldots,r_1+r_2} \nu(\varphi_z) \right) \leq \caux.
		\end{equation}
		Since $ G_n - H_m = G_n^* - H_m^* $, since each summand in the sum on the left hand side of inequality \eqref{eq:valbound} is non-negative, and since $ \mu \in S $, we get
		\begin{equation*}
			\mu(G_n-H_m) - \min_{z=1,\ldots,r_1+r_2} \mu(\varphi_z) \leq \caux.
		\end{equation*}
		From this we infer
		\begin{align*}
			\mu(G_n-H_m) &\leq \caux + \min_{z=1,\ldots,r_1+r_2} \mu(\varphi_z) \\
			&\leq \caux + \mu \left( P_{11}^*(n) \pi_{11} \alpha_1^n \right) \\
			&= \caux + \mu(\pi_{11}) + n \cdot \mu(\alpha_1) \\
			&= \caux + \mu(\pi_{11}) + n \cdot \min_{i=1, \ldots, d} \mu(\alpha_i) \\
			&\leq \caux + \mu(\pi_{11}) + \mu(G_n) - C^- \\
			&= \mu(G_n) + C,
		\end{align*}
		where in the second to last line we have used Proposition \ref{prop:boundgrowth} and $ c_0 $ becomes updated for the last time.
		This proves the theorem.
	\end{proof}
	
	The assumptions in our second theorem contain all assumptions from Theorem \ref{thm:bothlarge}.
	So it is not surprising that the proof of it builds on Theorem \ref{thm:bothlarge}.
	\begin{proof}[Proof of Theorem \ref{thm:onelarge}]
		By Theorem \ref{thm:bothlarge}, there exist constants $ c_0 $ and $ C $ such that for $ \min \setb{n,m} > c_0 $ we have
		\begin{equation*}
			\mu(a G_n - b H_m) \leq \mu(G_n) + C.
		\end{equation*}
		It remains to consider the case when one index is small.
		
		Therefore let, firstly, $ m \leq c_0 $ be fixed.
		Then $ H_m $ is fixed as well.
		Since there are only finitely many such cases, we can perform the following for each of this cases and write $ H_{(m)} $ for $ H_m $ in the calculation to emphasize that we consider only a fixed value for $ m $ each time.
		Put $ \alpha_{d+1} := 1 $ and consider the linear recurrence sequence
		\begin{equation*}
			\widetilde{G_n} := a G_n - b H_{(m)} \alpha_{d+1}^n = a G_n - b H_{(m)}.
		\end{equation*}
		As $ G_n $ is non-degenerate and has no constant characteristic root, $ \widetilde{G_n} $ is also non-degenerate.
		Thus Proposition \ref{prop:boundgrowth} yields
		\begin{align*}
			\mu \left( a G_n - b H_{(m)} \right) &= \mu \left( \widetilde{G_n} \right) \\
			&\leq C_{(m)}^+ + n \cdot \min_{i=1, \ldots, d+1} \mu (\alpha_i) \\
			&\leq C_{(m)}^+ + n \cdot \min_{i=1, \ldots, d} \mu (\alpha_i) \\
			&\leq C_{(m)}^+ + \mu(G_n) - C^- \\
			&= \mu(G_n) + C_{(m)}
		\end{align*}
		for $ n > c_{1,(m)} $.
		
		Consider now the second possibility, namely that $ n \leq c_0 $ is fixed.
		Then $ G_n $ is fixed as well.
		Since there are only finitely many such cases, we can perform the following for each of this cases and write $ G_{(n)} $ for $ G_n $ in the calculation to emphasize that we consider only a fixed value for $ n $ each time.
		Put $ \beta_{t+1} := 1 $ and consider the linear recurrence sequence
		\begin{equation*}
			\widetilde{H_m} := a G_{(n)} \beta_{t+1}^m - b H_m = a G_{(n)} - b H_m.
		\end{equation*}
		As $ H_m $ is non-degenerate and has no constant characteristic root, $ \widetilde{H_m} $ is also non-degenerate.
		So Proposition \ref{prop:boundgrowth} yields
		\begin{align*}
			\mu \left( a G_{(n)} - b H_m \right) &= \mu \left( \widetilde{H_m} \right) \\
			&\leq C_{(n)}^+ + m \cdot \min_{j=1, \ldots, t+1} \mu (\beta_j) \\
			&\leq C_{(n)}^+ + m \cdot \mu (\beta_{t+1}) \\
			&= C_{(n)}^+ \\
			&= \mu \left( G_{(n)} \right) + C_{(n)}
		\end{align*}
		for $ m > c_{1,(n)} $.
		
		Finally, we put
		\begin{equation*}
			c_1 := \max \setb{c_0, \max_{m \leq c_0} c_{1,(m)}, \max_{n \leq c_0} c_{1,(n)}}
		\end{equation*}
		and
		\begin{equation*}
			C' := \max \setb{C, \max_{m \leq c_0} C_{(m)}, \max_{n \leq c_0} C_{(n)}}.
		\end{equation*}
		For these constants, it holds that
		\begin{equation*}
			\mu(a G_n - b H_m) \leq \mu(G_n) + C'
		\end{equation*}
		whenever $ \max \setb{n,m} > c_1 $, and the theorem is proven.
	\end{proof}


\begin{thebibliography}{99}
		\bibitem{brownawell-masser-1986}
			\textsc{W. D. Brownawell and D. W. Masser},
			Vanishing sums in function fields,
			\textit{Math. Proc. Camb. Phil. Soc.} \textbf{100} (1986), no. 3, 427-434.
		\bibitem{fuchs-heintze-p3}
			\textsc{C. Fuchs and S. Heintze},
			On the growth of linear recurrences in function fields,
			\textit{Bull. Austr. Math. Soc.} \textbf{104} (2021), no. 1, 11-20.
		\bibitem{fuchs-heintze-p8}
			\textsc{C. Fuchs and S. Heintze},
			A function field variant of Pillai's problem,
			\textit{J. Number Theory} \textbf{222} (2021), 278-292.
		\bibitem{fuchs-karolus-kreso-2019}
			\textsc{C. Fuchs, C. Karolus and D. Kreso},
			Decomposable polynomials in second order linear recurrence sequences,
			\textit{Manuscripta Math.} \textbf{159(3)} (2019), 321-346.
		\bibitem{fuchs-petho-2005}
			\textsc{C. Fuchs and A. Peth\H{o}},
			Effective bounds for the zeros of linear recurrences in function fields,
			\textit{J. Theor. Nombres Bordeaux} \textbf{17} (2005), 749-766.
		\bibitem{petho-}
			\textsc{A. Peth\H{o}},
			Common values of a class of linear recurrences,
			\textit{preprint}, arXiv:2111.11081.
		\bibitem{stichtenoth-1993}
			\textsc{H. Stichtenoth},
			Algebraic Function Fields and Codes,
			Universitext, Springer, Berlin, 1993.
		\bibitem{zannier-2008}
			\textsc{U. Zannier},
			On composite lacunary polynomials and the proof of a conjecture of Schinzel,
			\textit{Invent. Math.} \textbf{174} (2008), no. 1, 127-138.
	\end{thebibliography}
\end{document}